\documentclass{amsart}
\usepackage{amsmath,amsfonts,amssymb,amsthm}
\usepackage[utf8]{inputenc}
\usepackage[T1]{fontenc}
\usepackage{a4wide}
\usepackage{enumitem}
\usepackage{graphicx}
\usepackage{url}
\usepackage[dvipsnames]{xcolor}
\usepackage{comment}
\excludecomment{comment}
\usepackage{fourier}
\DeclareMathAlphabet{\mathcalzp}{OMS}{zplm}{m}{n}
\DeclareMathAlphabet{\mathdutchcal}{U}{dutchcal}{m}{n}
\SetMathAlphabet{\mathdutchcal}{bold}{U}{dutchcal}{b}{n}
\DeclareMathAlphabet{\mathdutchbcal}{U}{dutchcal}{b}{n}
\usepackage{tikz,tikz-cd}
\usetikzlibrary{arrows,arrows.meta}
\tikzcdset{arrow style=tikz, diagrams={>=latex}}
\usetikzlibrary{calc}

\specialcomment{commentDRK}{\begingroup \color{MidnightBlue}}{\endgroup}
\excludecomment{commentDRK}
\specialcomment{remarkDRK}{\begingroup \color{Aquamarine}\it }{\endgroup}
\excludecomment{commentDRK}

\newcommand{\cA}{\mathcal{A}}
\newcommand{\cB}{\mathcal{B}}
\newcommand{\BB}{\mathbf{B}}

\newcommand{\Eis}{\mathdutchbcal{Eis}}

\newcommand{\cM}{\mathdutchcal{M}}
\newcommand{\cO}{\mathcal{O}}

\newcommand{\cS}{\mathdutchbcal{S}}
\newcommand{\cX}{\mathcal{X}}

\newcommand{\EE}{\mathbf{E}}
\newcommand{\FF}{\mathbb{F}}
\newcommand{\GG}{\mathbb{G}}
\newcommand{\MM}{\mathbb{M}}
\newcommand{\NN}{\mathbb{N}}
\newcommand{\PP}{\mathbb{P}}
\newcommand{\QQ}{\mathbb{Q}}
\newcommand{\ZZ}{\mathbb{Z}}
\newcommand{\ZZhat}{\widehat{\ZZ}}
\newcommand{\XX}{\mathcalzp{X}\!}
\newcommand{\XC}{\mathcal{X}}

\newcommand{\GL}{\mathrm{GL}}
\newcommand{\SL}{\mathrm{SL}}

\renewcommand{\SS}{\mathrm{SS}}
\newcommand{\Aut}{\mathrm{Aut}}
\newcommand{\Isom}{\mathrm{Isom}}
\newcommand{\Jac}{\mathrm{Jac}}

\newcommand{\isom}{\cong}
\newcommand{\dsp}{\displaystyle}

\newcommand{\overbar}[1]{\mkern 2.0mu\overline{\mkern-2.0mu#1\mkern-2.0mu}\mkern 2.0mu}
\newcommand{\kbar}{\overbar{\smash{k}\vphantom{t}}}

\newcommand{\Graph}{\mathcalzp{G}}

\newcommand{\Group}{G}

\newcommand{\Hom}{\mathrm{Hom}}

\newcommand{\Gal}{\mathrm{Gal}}

\renewcommand{\SS}{\mathrm{SS}}

\newcommand{\tors}{\mathrm{tors}}

\newcommand{\mmu}{\boldsymbol{\mu}}
\newcommand{\ab}{\mathrm{ab}}
\newcommand{\cyc}{\mathrm{cyc}}

\newtheorem{theorem}{Theorem}

\newtheorem{proposition}[theorem]{Proposition}
\newtheorem{definition}[theorem]{Definition}
\newtheorem*{theorem*}{Theorem}
\newtheorem*{definition*}{Definition}

\makeatletter
\newcommand{\leqnomode}{\tagsleft@true\let\veqno\@@leqno}
\newcommand{\reqnomode}{\tagsleft@false\let\veqno\@@eqno}

\makeatother

\makeatletter
\newcommand*{\relrelbarsep}{.264ex}
\newcommand*{\relrelbar}{%
  \mathrel{%
    \mathpalette\@relrelbar\relrelbarsep
  }%
}
\newcommand*{\@relrelbar}[2]{%
  \raise#2\hbox to 0pt{$\m@th#1\relbar$\hss}%
  \lower#2\hbox{$\m@th#1\relbar$}%
}
\providecommand*{\rightrightarrowsfill@}{%
  \arrowfill@\relrelbar\relrelbar\rightrightarrows
}
\providecommand*{\leftleftarrowsfill@}{%
  \arrowfill@\leftleftarrows\relrelbar\relrelbar
}
\providecommand*{\xrightrightarrows}[2][]{%
  \ext@arrow 0359\rightrightarrowsfill@{#1}{#2}%
}
\providecommand*{\xleftleftarrows}[2][]{%
  \ext@arrow 3095\leftleftarrowsfill@{#1}{#2}%
}
\makeatother

\begin{document}

\title{Supersingular isogeny graphs and Hecke modules with level structure}
\author{Leonardo Col\`o and David Kohel}
\date{}
\reqnomode

\begin{abstract}
  We study supersingular isogeny graphs with level structure and their associated
  Galois representations.
\end{abstract}

\maketitle

\section{Introduction}

The study of supersingular isogeny graphs has seen recent renewed interest
for their applications to cryptography.  This has resulted in advances in
algorithms for their construction and study.  The principal focus is on
explicit algorithms for local transversal of an $\ell$-isogeny graph,
beginning at a given supersingular point, under the hypothesis that the
characteristic $p$ is large.  The global properties of the isogeny
graphs, in particular the Ramanujan property, assures that random walks
give good mixing properties and that short cycles are rare.

On the other hand, in the prior work of Mestre and Oesterl\'e \cite{Mestre86},
and of Pizer \cite{Pizer80} in the quaternion ideal setting, the motivation
for studying isogeny graphs stems from the application to construction of
modular forms and their associated Galois representation.  The equivalence
of the quaternion and supersingular approaches builds on the Deuring
correspondence \cite{Deuring} and associated equivalence of categories
\cite{Kohel96}.  In this setting, one needs to study the global properties
of the graphs, such that the supersingular points, and associated $\ell$-isogeny
graphs, can be enumerated.  Consequently the prime $p$ remains small.
Nevertheless the computation tools for cryptographic construction apply
also to the investigation of the associated Galois representations,.

The method of graphs of Mestre interpreets the adjacency operators of
$\ell$-isogeny graphs as correspondences defining the Hecke operator
$T_\ell$ acting on the basis of supersingular points.
Mestre restricts to supersingular points, over $\FF_{p^2}$, on $X(1)$
or one of the genus-$0$ modular curves $X_0(N)$ of prime level, for
$N \in \{2,3,5,7,13\}$.  In this setting, the use of explicit correspondences
is en effective tool for computing $\ell$-isogenies, but becomes cumbersome
for moderate $\ell$.  One goal of the present work is to generalize the
modular approach to curves of higher level.  This has several advantages
and applications.  First we are abel to extend beyond semistable Galois
representation to study additive reduction.  Secondly, for a degree-$d$
cover $\XC_G \to X(1)$ by a modular curve $\XC_G$ of genus-$0$, the
size of the modular correspondences on $\XC_G$ are typically reduced by
a factor of $d$.  Combined with a sparseness of monomials, dictated by
certain combinatorial restructions, which gives an additional constant
factor to the reduction, the result can be spectacular.  Finally, by
pulling back $\ell$-isogeny graphs by such a cover, the computational
advantages of these reductions can be applied in lower level, in
particular level~$1$.

\medskip
\begin{commentDRK}
  After refining the final form we should detail the structure and results
  section by section.
\end{commentDRK}

\section{Graphs with level structure}
\label{Section:ModularGraphs}

We recall that an isogeny graph $\Graph_S(E)$ of an elliptic curve
$E/k$ and finite set of primes $S$ is a directed graph whose vertices
are elliptic curves $E_i$, up to $\kbar$-isomorphism, which are
$\kbar$-isogenous to $E$, and whose directed edges from a representative
curve $E_1$ are isogenies $E_1 \to E_2$ of prime degree $\ell \in S$, up
to isomorphism of the codomain $E_2$.  In particular there
are $\ell+1$ outgoing edges of degree $\ell$ from each vertex $E_i$.
When $S = \{\ell\}$, we will write simply $\Graph_\ell(E)$ for the graph.
In this section we generalize these notions to elliptic curves with
arbitrary level structure. For this purpose we adopt, the conventions
of Sutherland and Zywina~\cite{SutherlandZywina} for modular curves $\XC_G$
defined by open subgroups $\Group$ of $\GL_2(\ZZhat)$.
Morevoer, for given $G$ of level $N$, defined by its image in $\GL_2(\ZZ/N\ZZ)$,
we describe a category of pairs $(E,\cB\Group)$ where $E/k$ is an elliptic
curve and $\cB = (P,Q)$ is a basis of $N$-torsion on $E$.

\subsection*{Automorphism groups of torsion modules}

Let $\ZZhat$ be the projective limit of $\ZZ/N\ZZ$ and let $\GL_n(\ZZhat)$
be the general linear group over $\ZZhat$, equipped with projections
$\pi_N : \GL_n(\ZZhat) \to \GL_n(\ZZ/N\ZZ)$ for every $N$.
We say that a subgroup $G$ of $\GL_2(\ZZhat)$ is {\it open}
if and only if there exists $N$ such that
$
G = \pi_N^{-1}(\pi_N(G)),
$
and the minimal such $N$ is called the level of $G$.

Let $E/k$ be an elliptic curve over a field $k$. A choice of
compatible bases for the torsion subgroups $E[N] \subset E(\kbar)$
gives an isomorphism of the Tate module:
$$
T(E) = \varprojlim_{N} E[N] \isom \ZZhat^2 = \varprojlim_{N} (\ZZ/N\ZZ)^2,
$$
with automorphism group $\Aut(T(E)) \isom \GL_2(\ZZhat)$, equipped
with projections $$
\Aut(T(E)) \to \Aut(E[N]) \isom \GL_2(\ZZ/N\ZZ).
$$
The quotient $T(E) \to E[N]$ induces $\Aut(T(E)) \to \Aut(E[N])$,
which induces the natural projections
$$
\pi_N:\GL_2(\ZZhat) \longrightarrow
\GL_2(\ZZ/N\ZZ),
$$
under the choice of basis for $T(E)$, defined as the limit of
compatible bases for $E[N]$.

Analogously, let $\GG_m$ be the multiplicative group over $k$ and $\mmu_N$ its
subgroup of $N$-torsion in $\GG_m(\bar{k})$.  Let $\mmu_\infty$ be the injective
limit of the $\mmu_N$, and let $T(\GG_m)$ be the Tate module of $\GG_m$, defined
as the projective limit
$$
T(\GG_m) = \varprojlim_{N} \mmu_N.
$$
Although $T(\GG_m)$ is isomorphic to $\ZZhat$, with $\Aut(T(\GG_m) \isom \ZZhat^*$,
we write the group law of $T(\GG_m)$ multiplicatively. For $\zeta \in T(\GG_m)$ and
$n \in \ZZhat^*$, we write $\zeta^n$ for the image  of $(n,\zeta)$ under the action
$$
\Aut(T(\GG_m)) \times T(\GG_m) \to T(\GG_m).
$$
The Weil pairing gives an alternating Galois equivariant pairing
$$
\hat{e}: T(E) \times T(E) \longrightarrow T(\GG_m)
$$
which is compatible with the determinant map, in the sense that for
$x, y \in T(E)$ and $\sigma \in \Gal(\bar{k}/k)$, 
$$
\hat{e}(x,y)^\sigma =
\hat{e}(x^\sigma,y^\sigma) =
\hat{e}(x,y)^{\det(\rho_E(\sigma))}.
$$

\noindent{\bf Remark.}
The groups $(\ZZ/N\ZZ)^n$ are equipped with both systems of injections and surjections
$$
\begin{tikzcd}[row sep=0pt]
  (\ZZ/N\ZZ)^n \arrow[r] & (\ZZ/\ell{N}\ZZ)^n\\
  (1,\dots,1) \arrow[r,mapsto] & (\ell,\dots,\ell),
\end{tikzcd}
\quad
\begin{tikzcd}[row sep=0pt]
  (\ZZ/N\ZZ)^n & \arrow[l] (\ZZ/\ell{N}\ZZ)^n\\
  (1,\dots,1)  & \arrow[l,mapsto] (1,\dots,1),
\end{tikzcd}
$$
such that the injective and projective limits 
$$
\varinjlim_N (\ZZ/N\ZZ)^n \isom (\QQ/\ZZ)^n \mbox{ and }
\varprojlim_N (\ZZ/N\ZZ)^n \isom \ZZhat^n
$$
are equipped, respectively, with injections of  $(\ZZ/N\ZZ)^n$ and surjections on $(\ZZ/N\ZZ)^n$.
An automorphism $\sigma$ in $\Aut(\ZZhat^n)$ or $\Aut((\QQ/\ZZ)^n)$ induces a
compatible system of automorphisms in $\Aut((\ZZ/N\ZZ)^n)$, making the
diagram commute:
$$
\begin{tikzcd}
  \ZZhat^n \arrow[d] \arrow[r,"\sigma"] &  \ZZhat^n \arrow[d]\\
  (\ZZ/N\ZZ)^n \arrow[d,hook] \arrow[r] &  (\ZZ/N\ZZ)^n \arrow[d,hook]\\
  (\QQ/\ZZ)^n \arrow[r,"\sigma"] &   (\QQ/\ZZ)^n
\end{tikzcd}
$$
Conversely a unique automorphism is induced by a compatible system of automorphisms
of $(\ZZ/N\ZZ)^n$. Consequently, we have canonical isomorphisms
$
\GL_n(\ZZhat) \isom
\Aut(\ZZhat^n) \isom 
\Aut((\QQ/\ZZ)^n)$.  Any Galois representations compatible with the
system of injections and surjections hence injects in $\GL_n(\ZZhat)$,
which can be identified with automorphisms of either $(\QQ/\ZZ)^n$ or
$\ZZhat^n$.  In the context of elliptic curves, this means we identify
$$
\GL_2(\ZZhat) \isom \Aut(T(E)) \isom \Aut(E_\tors),
$$
and in the context of the multiplicative group, we have
$$
\GL_1(\ZZhat) = \ZZhat^* \isom \Aut(T(\GG_m)) \isom \Aut(\mmu_\infty).
$$

\subsection*{Group actions on bases}

Suppose that $\Group \subset \GL_2(\ZZhat)$ is an open subgroup of
level~$N$ and $E/k$ is an elliptic curve over a field $k$ of characteristic
coprime to $N$. Let $\cB = (P,Q)$ be an ordered basis for $E[N] \subset E(\kbar)$.
We denote by $\EE$ the pair $(E,\cB\Group)$, where $\cB\Group$ is
the orbit of $\cB$ under the right action of $\Group$ given by
$$
(P,Q)\left(\begin{array}{@{}cc@{}}a&b\\c&d\end{array}\right) = (aP+cQ,bP+dQ),
$$
and we refer to $\EE$ as an enhanced elliptic curve.
  
Let $\BB(E,N)$ be the set of all bases $\cB = (P,Q)$ for $E[N]$,
equipped with a right action of $\GL_2(\ZZ/N\ZZ)$,
$$
\BB(E,N) \times \GL_2(\ZZ/N\ZZ) \longrightarrow \BB(E,N).
$$
Then $\BB(E,N)$ is a principal homogeneous space over $\GL_2(\ZZ/N\ZZ)$,
and we obtain a map:
$$
\begin{tikzcd}[row sep=0mm]
  \BB(E,N) \times \GL_2(\ZZ/N\ZZ) \times (\ZZ/N\ZZ)^2 \arrow[r] & E[N].\\
  (\cB,\gamma,v) =
  \left(
    (P,Q),
    \left(\begin{array}{@{}c@{\;}c@{\;}c@{}}a&b\\c&d\end{array}\right),
    \left[\begin{array}{@{}c@{}}x\\y\end{array}\right]
    \right)
  \arrow[r,mapsto] & \cB \gamma v = (ax + by)P + (cx + dy)Q
\end{tikzcd}
$$
Each fixed $\cB$ in $\BB(E,N)$ equips $E[N]$ with an isomorphism
$(\ZZ/N\ZZ)^2 \isom E[N]$ and consequent left $\GL_2(\ZZ/N\ZZ)$-action.
In particular, there is an induced bijection:
$$
\begin{tikzcd}[row sep=0mm]
  \iota: \BB(E,N) \arrow[r] & \Isom((\ZZ/N\ZZ)^2,E[N])\\
  (P,Q) \arrow[r,mapsto] & 
    \left(\left[\begin{array}{@{}c@{}}x\\y\end{array}\right] \mapsto xP + yQ\right)
\end{tikzcd}
$$
which carries the right $\GL_2(\ZZ/N\ZZ)$-action on $\BB(E,N)$ to the
left $\GL_2(\ZZ/N\ZZ)$-action on $\Isom((\ZZ/N\ZZ)^2,E[N])$.
For $\cB \in \BB(E,N)$, we have
$\gamma^* \iota(\cB) = \iota(\cB) \circ \gamma = \iota(\cB \gamma)$,
and settting $\phi_\cB = \iota(\cB) \in \Isom(\ZZ/N\ZZ)^2,E[N])$, we have
$$
(\gamma_1\gamma_2)^*\phi_\cB\left(\left[\begin{array}{@{}c@{}}x\\y\end{array}\right]\right)
= {\gamma_2}^*{\gamma_1}^*\phi_\cB\left(\left[\begin{array}{@{}c@{}}x\\y\end{array}\right]\right)
= {\gamma_1}^*\phi_\cB\left(\gamma_2\left[\begin{array}{@{}c@{}}x\\y\end{array}\right]\right)
= \phi_\cB\left(\gamma_1\gamma_2\left[\begin{array}{@{}c@{}}x\\y\end{array}\right]\right)\cdot
$$
In working with bases we obtain a right action on bases $\BB(E,N)$ rather than a left
action on isomorphisms $\Isom((\ZZ/N\ZZ)^2,E[N])$.

Let $e_N: E[N] \times E[N] \longrightarrow \kbar^*$ the Weil pairing.  For any $\cB = (P,Q)
\in \BB(E,N)$, the image $e_N(\cB) = e_N(P,Q)$ is a primitive $N$-th root of unity in $\kbar^*$.
This gives a well-defined map $e_N: \BB(E,N) \longrightarrow \mmu_N$ whose image is the
subset of primitive $N$-th roots of unity.
It follows from the bilinearity and alternating properties of $e_N$ that for all $\gamma
\in \GL_2(\ZZhat)$, we have
$$
e_N(\cB\gamma) = e_N(\cB)^{\det(\gamma)}, \mbox{ and hence } e_N(\cB{G}) = e_N(\cB)^{\det(G)}.
$$
By the Galois equivariance of the Weil pairing, the compositum $\det \circ \rho_E$ agrees
with the restriction map $G_k \rightarrow \Gal(k(\zeta_N)/k)$.

\subsubsection*{Systems of bases and orbits}
A sequence $(\cB_N)_N = ((P_N,Q_N))_N$ of bases for $E[N]$ is a projective system
if for all positive integers $M$ and $N$, $[M]\cB_{MN} = (MP_{MN},MQ_{MN}) = (P_N,Q_N) = \cB_N$.
The projective system determines the projective limit,
$$
\hat{\cB} = \varprojlim_{N} \cB_N,
$$
as a $\ZZhat$-basis for the Tate module $T(E)$, such that the projections
$\pi_N: T(E) \rightarrow E[N]$ induce $\pi_N(\hat{\cB}) = \cB_N$.
Conversely every such basis $\hat{\cB}$ uniquely determines the projective system of bases.
We denote the set of all $\ZZhat$-bases for $T(E)$ by $\BB(T(E))$, equipped with
induced projections $\pi_N: \BB(T(E)) \rightarrow \BB(E,N)$.

\begin{proposition}
  Let $G$ be an open subgroup of $\GL_2(\ZZhat)$ of level $N$.
  For $M \in \NN$, let $\pi_M: \GL_2(\ZZhat) \rightarrow \GL_2(\ZZ/M\ZZ)$
  be the reduction map, and set $G_M = \pi_M^{-1}(\pi_M(G))$.

  \begin{itemize}
  \item
    If $N_1$ and $N_2$ are coprime integers such that $N = N_1N_2$, then
    the projection maps determine a bijection of orbits:
    $$
    [N_2] \times [N_1]:
    \BB(E,N)/G \longrightarrow
    \BB(E,N_1)/G_{N_1} \times \BB(E,N_2)/G_{N_2}.
    $$
  \item
    For every positive integer~$M$, there is a bijection
    $$
    [M]: \BB(E,MN)/G \longrightarrow \BB(E,N)/G,
    $$
    taking $\cB_{MN} \in \BB(E,MN)$ to $[M]\cB_{MN} \in \BB(E,N)$.
    \smallskip
  \item
    There exists a bijection between $\BB(T(E))/G$ and $\BB(E,N)/G$.
  \end{itemize}
\end{proposition}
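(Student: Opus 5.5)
The plan is to reduce all three statements to elementary facts about the reduction maps $\GL_2(\ZZ/MN\ZZ) \to \GL_2(\ZZ/N\ZZ)$ and about the sets $\BB(E,\cdot)$ as principal homogeneous spaces. The basic input is that $\BB(E,N)$ is a principal homogeneous space over $\GL_2(\ZZ/N\ZZ)$, and that $G$ acts on it through $\pi_N(G)$. Since $G$ has level $N$, the group $G$ contains $\ker \pi_N$. I will first check that the multiplication maps $[M]:\BB(E,MN)\to\BB(E,N)$ and $\pi_N:\BB(T(E))\to\BB(E,N)$ are equivariant for the reductions $\GL_2(\ZZ/MN\ZZ)\to\GL_2(\ZZ/N\ZZ)$ and $\GL_2(\ZZhat)\to\GL_2(\ZZ/N\ZZ)$. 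This holds because $[M](aP+cQ)=a[M]P+c[M]Q$, with the coefficients now read mod $N$.

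For the second item, surjectivity of $[M]$ on bases is the first thing to establish. Given $(P,Q)$ a basis of $E[N]$, I choose $P',Q'\in E[MN]$ with $[M]P'=P$ and $[M]Q'=Q$. Then $(P',Q')$ generates a subgroup of $E[MN]\isom(\ZZ/MN\ZZ)^2$ that surjects onto $E[N]$ under $[M]$. By Nakayama (lifting of generators modulo the primes dividing $MN$, all of which divide $N$ up to those dividing only $M$), one must adjust: for primes dividing $M$ but not $N$, modify $P',Q'$ by elements of $E[M]$ to make them generate. This is the step needing care. I would handle it via surjectivity of $\SL_2(\ZZ/MN\ZZ)\to\SL_2(\ZZ/N\ZZ)$, or more simply by noting that $\GL_2(\ZZ/MN\ZZ)\to\GL_2(\ZZ/N\ZZ)$ is surjective: it is the reduction of $\GL_2(\ZZhat)\to\GL_2(\ZZ/N\ZZ)$, which is surjective since $\ZZhat=\prod_p\ZZ_p$ and each $\GL_2(\ZZ_p)\to\GL_2(\ZZ/p^k)$ is surjective. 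Surjectivity on bases then follows by fixing one basis $\cB_{MN}$, whose image $\cB_N$ is a basis, and translating by lifts of group elements. For injectivity on orbits, suppose $[M]\cB=[M]\cB'$ with $\cB'=\cB\gamma$. Then $\pi_N(\gamma)=1$, so $\gamma\in\ker\pi_N\subseteq G$, and hence $\cB'\in\cB G$. Equivariance shows the map is well defined on orbits.

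The third item is the same argument in the limit. The map $\BB(T(E))\to\BB(E,N)$ is surjective because $\BB(T(E))$ is a $\GL_2(\ZZhat)$-torsor, $\GL_2(\ZZhat)\to\GL_2(\ZZ/N\ZZ)$ is surjective, and $\BB(T(E))$ is nonempty (a compatible system of bases exists by the second item and a compactness/inverse-limit argument over nonempty finite sets). Fibres lie in single $\ker\pi_N$-orbits, and $\ker\pi_N\subseteq G$.

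For the first item, I use the CRT decompositions $E[N]=E[N_1]\oplus E[N_2]$ and $\GL_2(\ZZ/N\ZZ)=\GL_2(\ZZ/N_1\ZZ)\times\GL_2(\ZZ/N_2\ZZ)$. The map $[N_2]$ sends $E[N]$ onto $E[N_1]$, acting as an automorphism, multiplication by $N_2$, on $E[N_1]$; the map $[N_1]$ behaves likewise. Hence $[N_2]\times[N_1]$ is a bijection $\BB(E,N)\to\BB(E,N_1)\times\BB(E,N_2)$. It intertwines $\gamma$ with $(\pi_{N_1}\gamma,\pi_{N_2}\gamma)$, since scalar multiplication commutes with matrices. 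It therefore descends to a surjection from $G$-orbits to $G_{N_1}\times G_{N_2}$-orbits. The main obstacle is injectivity here. It requires $\pi_N(G)=\pi_{N_1}(G)\times\pi_{N_2}(G)$ under CRT, which fails for general $G$; for instance, $G$ could be a fibre product. I would first try to verify it in the case where $G$ decomposes as a product over the coprime parts of its level. If that is not automatic, I would read the statement as requiring this hypothesis, i.e.\ $G=G_{N_1}\cap G_{N_2}$, and add it explicitly. With the hypothesis, injectivity follows: $\pi_N(G)=\pi_N(G_{N_1})\cap\pi_N(G_{N_2})$ matches the product of stabilizing groups.
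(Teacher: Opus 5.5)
The paper states this proposition without proof, so there is no argument of the authors' to compare with. Judged on its own terms, your proposal is sound. For the second and third items your route is exactly what is needed: the sets of bases are torsors, $\GL_2(\ZZhat)\to\GL_2(\ZZ/MN\ZZ)$ is surjective, and $\ker\pi_N\subseteq G$.

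A few tidy-ups for those items:
\begin{itemize}
\item The Nakayama discussion can be deleted, since the torsor argument alone gives surjectivity of $[M]$ on bases.
\item Injectivity on orbits should start from $[M]\cB_1=([M]\cB_2)\pi_N(g)$ with $g\in G$. You reduce this to equality by replacing $\cB_2$ with $\cB_2\pi_{MN}(g)$.
\item Your $\gamma$ lives in $\GL_2(\ZZ/MN\ZZ)$, so what you actually use is $\ker(\GL_2(\ZZ/MN\ZZ)\to\GL_2(\ZZ/N\ZZ))=\pi_{MN}(\ker\pi_N)\subseteq\pi_{MN}(G)$, again by surjectivity from $\GL_2(\ZZhat)$.
\item You implicitly need the characteristic of $k$ prime to $MN$ for the second item. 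For the third you need $T(E)\isom\ZZhat^2$, which means characteristic $0$; otherwise use the prime-to-$p$ Tate module, and the same argument goes through.
\end{itemize}

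On the first item you should state your conclusion firmly rather than as a fallback: as written it is false, and your extra hypothesis is exactly the right one. The map is always a well-defined surjection on orbits. Its source has $[\GL_2(\ZZhat):G]$ elements. Its target has $[\GL_2(\ZZhat):G_{N_1}][\GL_2(\ZZhat):G_{N_2}]=[\GL_2(\ZZhat):G_{N_1}\cap G_{N_2}]$ elements by the Chinese remainder theorem. Since $G\subseteq G_{N_1}\cap G_{N_2}$, the map is a bijection if and only if $G=G_{N_1}\cap G_{N_2}$.

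The paper itself supplies a counterexample. Take the twisted nonsplit Cartan group $G=C_{ns}^{-3}(2)=\ker\bigl(\gamma\mapsto\chi_{-3}(\det\gamma)\,\varepsilon(\gamma)\bigr)$, which has index $2$ and level $6$. Its reductions modulo $2$ and modulo $3$ are all of $\GL_2(\ZZ/2\ZZ)$ and $\GL_2(\ZZ/3\ZZ)$, so $G_2=G_3=\GL_2(\ZZhat)$. The target is therefore a single point, while $\BB(E,6)/G$ has two elements.
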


\smallskip
\noindent{\bf Remark.}
The previous proposition asserts that the action of $G$ is local and that the orbits are
determined by the quotient to the $N$-torsion.

\subsection*{Graphs with level structure}
For each given elliptic curve $E$, basis $\cB$ for $E[N]$, and open subgroup $G$
of level $N$, there exists a finite set of classes:
$$
\cB_1 G = \cB\gamma_1G,\
\cB_2 G = \cB\gamma_2G, \dots,
\cB_n G = \cB\gamma_nG, \mbox{ such that } \bigcup_{i=1}^n \cB \gamma_i G = \BB(E,N),
$$
in bijection with $\GL_2(\ZZ/N\ZZ)/G$. This gives a finite number $\EE_1$, $\EE_2$,
\dots $\EE_n$ of classes over each $E$.  We first define an isogeny graph whose
vertices are enhanced elliptic curves, which thereafter can be identified wth points
a modular curve $\XC_G$.

Let $\EE = (E,\cB\Group)$ be a fixed enhanced elliptic curve of level $N$.
For a set $S$ of primes coprime to $N$, the isogeny graph $\Graph_S(\Group,\EE)$
is the graph whose vertices are pairs
$\EE_i = (E_i,\cB_i\Group)$ consisting of an elliptic curve
$E_i$ in the $\kbar$-isogeny class of $E$ and an ordered
basis $\cB_i = (P_i,Q_i))$ for $E_i[N]$, and whose edges are
isogenies $\varphi: E_1 \longrightarrow E_2$ of prime degree $\ell \in S$,
such that the equivalence class
$\varphi\big(\cB_1\Group\big) = \varphi(\cB_1)\Group = \cB_2\Group$.
As for graphs of level~$1$, when $S = \{\ell\}$, we will write simply
$\Graph_\ell(\Group,\EE)$.
We denote $\SS(\Group,p)$ the set of pairs $\EE = (E,\cB\Group)$
where $E$ is supersingular, and denote the associated graph of supersingular curves
with level structure by $\Graph_S(\SS(\Group,p))$.

For an inclusion $H \subseteq \Group$ of open groups, setting
$\EE{G}  = (E,\cB{H})G = (E,\cB{G})$ we obtain a covering projection of graphs:
$$
\Graph_S(H,\EE) \longrightarrow \Graph_S(\Group,\EE{G})
$$
sending the vertex $(E_i,\cB_i H)$ to $(E_i,\cB_i\Group)$, and
an edge $\varphi: E_1 \longrightarrow E_2$ of $\Graph(H,\EE)$ to an edge
of $\Graph(\Group,\EE{G})$, since if $\varphi(\cB_1)H = \cB_2H$ then
$\varphi(\cB_1)\Group =  \cB_2\Group$.
In particular, associated to the inclusion $\Group \subseteq \GL_2(\ZZhat)$
we obtain the covering $\Graph_S(\Group,\EE) \longrightarrow \Graph_S(E)$
of the level~$1$ graph.

The description in terms of enhanced elliptic curves $\EE = (E,\cB{G})$ implicitly
requires enumerating orbits of bases over the splitting field for the $N$-torsion
subgroup $E[N]$.  However, when working with the subgroup of the form $B_1(N)$
the enhanced elliptic curve $\EE$ can be represented by a pair $(E,P)$, for
a Borel subgroup $B_0(N)$ the curve $\EE$ is represented by a pair
$(E,\langle{P}\rangle)$, where the subgroup $\langle{P}\rangle$ is specified by
the kernel polynomial $\psi(x)$ such that $\psi(x(aP)) = 0$ for all $a \in (\ZZ/N\ZZ)^*$.
Next we describe the approach via modular curves, in order to define
modular isogeny graphs, in which we identify enhanced elliptic curves with
points on the modular curve $\XC_G$.

\section*{Modular groups and modular curves}


\subsection*{Galois representations}
For a field $k$, we write $k^{\cyc}$ for the extension $k(\mmu_\infty) \subset \bar{k}$:
$$
k^{\cyc} = k(\mmu_\infty) = \varinjlim_N k(\mmu_N) \mbox{ where } \mmu_\infty = \varinjlim_N \mmu_N.
$$
In particular for $k = \QQ$, the field $\QQ^{\cyc} = \QQ^{\ab}$ is the maximal
abelian extension of $\QQ$ by the Kronecker--Weber theorem.
In general we identify $\Gal(k^\cyc/k)$ with a subgroup of $\ZZhat^* \isom \Aut(\mmu_\infty)
\isom \Aut(T(\GG_m))$.

The action of the Galois group on the torsion subgroups induces a Galois representations:
$$
\rho_{E,N}: G_k \longrightarrow \GL_2(\ZZ/N\ZZ) \isom \Aut(E[N]),
$$
extending to the projective limit:
$$
\rho_E: G_k = \Gal(\kbar/k) \longrightarrow \GL_2(\ZZhat) \isom \Aut(T(E)).
$$
When $k$ is a number field, Serre's open image theorem~\cite{Serre72} asserts that if
the curve $E/k$ is non-CM, then the image $\rho_E(G_k)$ is open, and in particular of
finite index.

The projections $\pi_N:\GL_2(\ZZhat) \longrightarrow \GL_2(\ZZ/N\ZZ)$
are compatible with the representations the $N$-torsion subgroups of $E$,
in the sense that $\rho_{E,N} = \pi_N \circ \rho_E$.
Moreover, the composition with determinant map $\det \circ \rho_E$ gives
the cyclotomic representation of $G_k$ restricted to $k^\cyc$. 

\subsection*{Admissible groups}
Let $G \subseteq \GL_2(\ZZhat)$ be an open subgroup of level $N$.  The objective
is to define the notion of {\it admissible} group $G$ such that there exists
smooth proper modular curve $\XC_G$ over $\ZZ[1/N]$, whose points are identified
with enhanced elliptic curves $\EE = (E,\cB\Group)$.  We require first a condition
for $\XC_G$ to be defined over $\QQ$, over a number field $K \subseteq \QQ(\zeta_N)$,
or over a finite quotient field $k$ of $\cO_K[1/N]$.

For a number field $K/\QQ$, we identify
$$
\Gal(K(\zeta_N)/K) \subseteq \Gal(\QQ(\zeta_N)/\QQ) = (\ZZ/N\ZZ)^*.
$$
The group $G$ is said to be admisssible over $K$ if $\det(\pi_N(G))$ contains
$\Gal(K(\zeta_N)/K$, and when $K = \QQ$ we say simply that $G$ is admissible
if $\det(\pi_N(G)) = (\ZZ/N\ZZ)^*$.  Admissibility gives a necessary condition
for the modular curve $\XC_G$ to be defined over $K$, in particular that the
orbit
$$
e_N(\cB{G}) =
\{ e_N(\cB\gamma) : \gamma \in G \} =
\{ e_N(\cB)^{\det(\gamma)} : \gamma \in G \} = e_N(\cB)^{\det(G)}
$$
is stable under $\Gal(K(\zeta_N)/K)$. While the orbit depends on
the choice of $\cB$ (and $E$), the stability condition does not.
Conversely the admissibility of $G$ is sufficient to define $\XC_G$
over $K$.
and when $K = \QQ$, hence $\det(\pi_N(G)) = (\ZZ/N\ZZ)^*$, the curve
can be defined over $\ZZ[1/N]$. 
Given a congruence subgroup $\Gamma \subseteq \SL_2(\ZZ)$, there may be multiple
lifts to $G \subseteq \GL_2(\ZZhat)$ such that $\Gamma = \SL_2(\ZZ) \cap G$,
which gives ambiguity regarding the twist associated to $\Gamma$ in descending
from $\QQ(\zeta_N)$ to $\QQ$.
By working with open subgroups $G \subset \GL_2(\ZZhat)$ we avoid this
ambiguity and retain a closer correspondence with the computational model
of orbits $\cB{G}$ of $N$-torsion points on supersingular elliptic curves.

\subsection*{Modular curves}
\mbox{}
Each of the standard congruences subgroups $\Gamma$ equal to $\Gamma(N)$, $\Gamma_1(N)$, $\Gamma_0(N)$
or to one of the Cartan subgroups $\Gamma_s(N)$ or $\Gamma_{ns}(N)$ has an admissible lift to
$G \subseteq \GL_2(\ZZhat)$.  In particular, we define the lifts
$$
G(N)   = \pi_N^{-1}\left(\left\{\left(\begin{array}{@{\,}c@{\;\;}c@{\,}} \pm1 & 0 \\ 0 & {*} \end{array}\right)\right\}\right)
\mbox{ such that } \Gamma(N) = \SL_2(\ZZ) \cap G(N),
$$
parametrizing elliptic curves $E$ equipped with an isomorphism $\ZZ/N\ZZ \times \mmu_N \isom
\langle{P,Q}\rangle = E[N]$, as group schemes,
$$
B_1(N) = \pi_N^{-1}\left(\left\{\left(\begin{array}{@{\,}c@{\;\;}c@{\,}} \pm1 & {*} \\ 0 & {*} \end{array}\right)\right\}\right)
\mbox{ such that } \Gamma_1(N) = \SL_2(\ZZ) \cap B_1(N),
$$
parametrizing elliptic curves $E$ with a constant group scheme $\ZZ/N\ZZ \isom \langle{P}\rangle \subset E[N]$, and
$$
B_0(N) = \pi_N^{-1}\left(\left\{\left(\begin{array}{@{\,}c@{\;\;}c@{\,}} {*} & {*} \\ 0 & {*} \end{array}\right)\right\}\right)
\mbox{ such that } \Gamma_0(N) = \SL_2(\ZZ) \cap B_0(N),
$$
parametrizing elliptic curves with a cyclic subgroup $\langle{P}\rangle$ of order $N$.
We denote the respective modular curves $\XC_G$ by $X(N)$, $X_1(N)$ and $X_0(N)$.

\begin{proposition}
  Let $\Group$ be an open subgroup in $\GL_2(\ZZhat)$ of level $N$ and
  $(E,\cB\Group)$ a pair consisting of an elliptic curve $E/K$ and the
  orbit of a basis $\cB \in \BB(E,N)$.
  Then $(E,\cB\Group)$ is associated to a rational point
  in $\cX_G(K)$ if and only if
  $$
  \phi_\cB^{-1}\rho_{E,N}(G_K)\phi_\cB \subset \pi_N(G).
  $$
  where $\phi_\cB: (\ZZ/N\ZZ)^2 \to E[N]$ is the isomorphism induced by $\cB$.
\end{proposition}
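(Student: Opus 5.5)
The plan is to reduce everything to Galois stability of the orbit $\cB G$ inside $\BB(E,N)$. I will use the moduli description of $\XC_G$ over $K$, which is implicit in the construction above. A $K$-rational point of $\XC_G$ corresponds to a pair $(E,\cB G)$ with $E/K$ and the orbit $\cB G$ stable under $G_K$. More precisely: for every $\sigma \in G_K$, the pair $(E,\cB G)^\sigma = (E^\sigma,\cB^\sigma G) = (E,\cB^\sigma G)$ should be isomorphic to $(E,\cB G)$. To keep the bookkeeping clean I will first treat the case where $\Aut(E) = \{\pm1\}$. In that case $-1$ acts as $-I$ on bases, and I will use that the groups in question contain $-I$, as the lifts $G(N)$, $B_1(N)$, $B_0(N)$ do. With this, the rationality condition becomes the literal equality $\cB^\sigma G = \cB G$ for all $\sigma \in G_K$.

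The key step is then a translation between the Galois action on bases and the matrix $\phi_\cB^{-1}\rho_{E,N}(\sigma)\phi_\cB$. For $\sigma \in G_K$, write $\cB^\sigma = (P^\sigma,Q^\sigma)$. Since $\BB(E,N)$ is a principal homogeneous space over $\GL_2(\ZZ/N\ZZ)$, there is a unique $\gamma_\sigma$ with $\cB^\sigma = \cB\gamma_\sigma$. I will check with the map $\iota$ that $\gamma_\sigma = \phi_\cB^{-1}\circ\rho_{E,N}(\sigma)\circ\phi_\cB$. Indeed, $\phi_{\cB^\sigma}(v) = \sigma(\phi_\cB(v))$ because $\sigma$ acts $\ZZ$-linearly on $E[N]$, and $\phi_{\cB\gamma} = \phi_\cB\circ\gamma$. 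Hence $\phi_\cB\circ\gamma_\sigma = \rho_{E,N}(\sigma)\circ\phi_\cB$.

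It follows that $\cB^\sigma G = \cB\gamma_\sigma G$. This equals $\cB G$ if and only if $\gamma_\sigma \in \pi_N(G)$, using freeness of the action and the fact that $G$ acts through $\pi_N(G)$ by the previous Proposition. Quantifying over all $\sigma \in G_K$ gives exactly
$$\phi_\cB^{-1}\rho_{E,N}(G_K)\phi_\cB \subset \pi_N(G).$$
This proves both directions at once. As a side check, the condition does not depend on the choice of $\cB$ within its orbit: replacing $\cB$ by $\cB g$ with $g \in G$ conjugates the left-hand side by $\pi_N(g)$, which preserves the containment.

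I expect the main obstacle to be the first step, namely making precise what "associated to a rational point" means. This concerns the passage from isomorphism classes over $\kbar$ to $K$-points, including twists and extra automorphisms when $j(E) \in \{0,1728\}$. For that step I would state the moduli interpretation explicitly: $\XC_G(K)$ parametrizes, away from cusps, $\kbar$-isomorphism classes of pairs whose class is $G_K$-fixed. Then the statement concerns the specific model $E/K$. The ``if'' direction is unaffected by automorphisms. For ``only if'', one needs to choose the model $E/K$ appropriately, or assume $\pm I \in G$ and $\Aut(E)=\{\pm1\}$. I would add that hypothesis as the natural reading and remark that in general the condition holds after a suitable twist of $E$.
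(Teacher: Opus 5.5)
The paper gives no proof of this proposition. It is asserted as part of the moduli description of $\XC_G$, and the same principle is restated later, in the subsection on modular isogeny graphs: an enhanced curve $(E,\cB G)$ with $E/k$ and $\rho_E(G_k)\subseteq G$ is an associated $k$-rational point.

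Your argument is correct and supplies the verification the paper leaves implicit. With the paper's right action $(P,Q)\gamma=(aP+cQ,bP+dQ)$ one has $\phi_{\cB\gamma}=\phi_\cB\circ\gamma$ and $\phi_{\cB^\sigma}=\rho_{E,N}(\sigma)\circ\phi_\cB$. Hence $\cB^\sigma=\cB\gamma_\sigma$ with $\gamma_\sigma=\phi_\cB^{-1}\rho_{E,N}(\sigma)\phi_\cB$. Simple transitivity of $\GL_2(\ZZ/N\ZZ)$ on $\BB(E,N)$ then converts $G_K$-stability of $\cB G$ into the stated containment. You do not need the earlier proposition here, since $G$ acts on $\BB(E,N)$ through $\pi_N$ by definition.

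The paper effectively reads ``associated to a rational point'' as $G_K$-stability of the orbit $\cB G$ on the given model $E/K$. Under that reading your computation is the entire proof, and no hypothesis on $G$ or $\Aut(E)$ is needed.

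For the ``if'' direction you could add one point. Taking determinants, and using that $\det\circ\rho_{E,N}$ is the cyclotomic character, the containment forces $\Gal(K(\zeta_N)/K)\subseteq\det(\pi_N(G))$. So $G$ is admissible over $K$ in the paper's sense, $\XC_G$ is defined over $K$, and a $G_K$-fixed point of $\XC_G(\overline{K})$ is a $K$-point.

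Your caveat for the coarse-moduli reading is genuinely needed. Suppose $-I\notin\pi_N(G)$ and $\chi$ is a nontrivial quadratic character of $G_K$. Transport the basis along a $\overline{K}$-isomorphism $\psi\colon E\to E^\chi$. Then the twist $E^\chi$ represents the same point of $\XC_G(K)$, but its matrices are $\chi(\sigma)\gamma_\sigma$. So at most one of $E$, $E^\chi$ satisfies the containment, and the ``only if'' direction fails as literally stated for arbitrary open $G$.

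The same failure occurs with quartic or sextic twists when $j(E)\in\{0,1728\}$ and the extra automorphisms do not act through $\pi_N(G)$. Your proposed fix is the right one: either assume $-I\in G$ and $\Aut(E_{\overline{K}})=\{\pm1\}$, or restate the conclusion for a suitable twist of $E$.
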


A point on $X(N)$ over a number field $K$ can be identified with an elliptic curve
$E/K$ with Galois action of $\rho_{E,N}(G_K) \subseteq \phi_\cB \pi_N(G(N)) \phi_\cB^{-1}$,
with respect to a basis $\cB = (P,Q)$ for $E[N]$.
This implies the existence of isomorphisms $\langle{P}\rangle \isom \ZZ/N\ZZ$,
taking $P \in E(K)$ to $1 \in \ZZ/N\ZZ$, and $\langle{Q}\rangle \isom \mmu_N$ taking
$Q$ to $e_N(P,Q) = \zeta_N \in \mmu_N$, compatible with the action of Galois,
which maps through
$$
\det(G) = \Gal(K(\zeta_N)/K) \subseteq \det(G(N)) = (\ZZ/N\ZZ)^*.
$$
After taking the quotient by the Borel subgroup $B_1(N)$, an enhanced elliptic curve
$(E,\cB B_1(N))$ can be identified with the pair $(E,\pm P)$, which is associated to
a point on $X_1(N)$.  Finally the points on the curve $X_0(N)$ depend only on the
pair $(E,\langle{P}\rangle)$ consisting of a curve and Galois-stable subgroup
$\langle{P}\rangle$ without prescribed generator.

The modular curves $X_0(N)$ are of particular note, equipped with an Atkin-Lehner involution
$$
w_N: X_0(N) \longrightarrow X_0(N)
$$
such that $w_N^2 = 1$. Specifically, given an elliptic curve $E/K$ with basis $\cB = (P,Q)$
such that $\rho_{E,N}(G_K) \subseteq \phi_\cB \pi_N(B_0(N)) \phi_\cB^{-1}$,
the enhanced elliptic curve $(E,\cB B_0(N))$
is determined by the tuple $(E,\langle{P}\rangle)$ to which we associate a $K$-rational
point.  The Atkin-Lehner involution is determined on points by the map
$$
(E,\langle{P}\rangle) \longmapsto ((E/\langle{P}\rangle),(E[N]/\langle{P}\rangle)).
$$
Composing the canonical projection $\pi: X_0(N) \rightarrow X(1)$, sending
$(E,\langle{P}\rangle)$ to $E$, with $w_N$, sends $(E,\langle{P}\rangle)$ to
$E/\langle{P}\rangle$.
We denote $X_0(N)$ equipped with this pair of maps by:
$$
X_0(N) \xrightrightarrows[\hspace{5mm}]{\hspace{5mm}} X(1),
$$
which is equivalent to the data of an immersion $X_0(N) \rightarrow X(1) \times X(1)$,
a correspondence on the surface $X(1) \times X(1)$. These maps are fundamental to the
definition of Hecke operators as correspondences.

We now turn to the definition of the Cartan modular curves.
Let $R$ be an imaginary quadratic ring with optimal embedding $\iota: R \to \MM_2(\ZZ)$,
determining $\iota_N : R \to \MM_2(\ZZ/N\ZZ)$.  We define the Cartan subgroup $C_\iota(N)$
of level $N$ associated to $\iota$ by
$$
C_\iota(N) = \pi_N^{-1}\left(\iota_N(R)^*\right).
$$
If every primes divisor $p$ of $N$ splits in $R$ we say that $C_\iota(N)$ is the split
Cartan subgroup of level $N$, denoted $C_s(N)$, and conversely if every prime divisor
is inert in $R$, we say that $C_\iota(N)$ is the nonsplit Cartan subgroup of level $N$,
denoted $C_{ns}(N)$. The split or inert Cartan subgroups are unique up to conjugation.
As for the other classical modular curves, we denote the associated modular curves
$\XC_G$ by $X_s(N)$ or $X_{ns}(N)$ respectively, or more generally by $X_\iota(N)$.
The Cartan subgroups admit involutions by conjugation at each $p$ dividing $N$,
and we denote the normalizer subgroups of $\GL_2(\ZZhat)$ by $C_s^+(N)$, $C_{ns}^+(N)$
or $C_\iota^+(N)$, with associated modular curves $X_{s}^+(N)$, $X_{ns}^+(N)$
or $X_\iota^+(N)$.
\bigskip

For an inclusion of admissible groups $H \subset G$, of levels $M$ and $N$, we
obtain a morphisms $\XC_{H} \to \XC_{G}$ over $\ZZ[1/M]$.
In particular, for $G = \GL_2(\ZZhat)$
we obtain the $j$-line, $X(1) = \PP^1/\ZZ$, equipped with $\XC_H \rightarrow X(1)$
for all open subgroups $H \subset \GL_2(\ZZhat)$.   
Given arbitrary open subgroups $H$ and $G$ in $\GL_2(\ZZhat)$, we denote by
$\XC_G(H)$ the modular curve associated to $G \cap H$, equipped with the cover
$\XC_G(H) \longrightarrow \XC_G$.  In particular $\XC_G(B_0(\ell))$, for $\ell$
coprime to the level of $G$, gives the correspondence
$$
\XC_G(B_0(\ell)) \xrightrightarrows[\hspace{5mm}]{\hspace{5mm}} \XC_G.
$$
\subsection*{Finite base fields}
The previous discussion of open subgroups is framed in terms of a number field $K$,
introducing the condition of admissibility to justify when the field of definition
of the modular curves descends to $K$. However, when considering the specialization
to finite fields, especially $\FF_{p^2}$, the condition for admissibility is simpler:
we just need
$$
\langle{p^2}\rangle \subset \det(G) \subset (\ZZ/N\ZZ)^*.
$$
When $N = 24$, this is automatically satisfied (for $p > 3$), since $p^2 \equiv 1 \bmod 24$. 

For the study of supersingular points, it suffices to work over
$\ZZ[\zeta_N,1/N]$, and a quotient field $k$, and identify $\XC_G$ with a classical
modular curve $\XC_\Gamma$, where $\Gamma$ is a congruence subgroup of $\SL_2(\ZZ)$.
The graph vertices are identified with supersingular points in $\XC_G(k)$. 
The generalization to an open subgroup $G$ in $\GL_2(\ZZhat)$ permits one to
control the twists of $\XC_\Gamma$, and fits better with the computational model
in which we represent vertices as enhanced elliptic curves, modulo the action of
a subgroup of $G$.

\subsection*{Modular isogeny graphs}

We can identify the vertices of $\Graph_S(\Group,\EE)$ with points on the modular curve $\XC_\Group$.
An enhanced elliptic curve $\EE = (E,\cB\Group)$, with $E/k$ and such that $\rho_{E}(\Group_k)
\subseteq G$, is an associated $k$-rational point on the modular curve~$\XC_G$.
An edge of $\Graph_S(\Group,\EE)$ is associated with a point on the modular curve $\XC_G(B_0(\ell))$,
for $\ell$ coprime to $N$, 
and otherwise $\XC_G(B_0(\ell^t))$, where $t$ is the smallest exponent such that $G$
is not contained in $B_0(\ell^t)$.
The correspondence $\XC_G(B_0(\ell^t)) \rightrightarrows \XC_G$ gives the initial
and terminal vertices of the edge.
When emphasizing the perspective of moduli points on the modular curve $\XC_G$, we
write $\Graph_S(\XC_G,\EE)$ for the modular isogeny graph associated to $G$ whose
vertices are rational points on $\XC_G$.
Similarly, we write $\Graph_S(\SS(\XC_\Group,p))$ for the associated supersingular isogeny
graph on the set $\SS(\XC_\Group,p)$, of supersingular points on $\XC_G$.

\subsection*{Independence and hybrid level structures}
Next we introduce the notion of independence of level structures with a view to defining
isogeny graphs using hybrid models of elliptic curves parametrized by modular curves and
equivalence classes of torsion points.

\begin{definition}
Given open subgroups $H_1$ and $H_2$ of $\GL_n(\ZZhat)$ of levels $N_1$
and $N_2$, set $G = \langle{H_1,H_2}\rangle$ and $H = H_1 \cap H_2$
of level $N = \mathrm{lcm}(N_1,N_2)$. We say that $H_1$ and $H_2$ determine
an $H$-structure of level $N$, and say that they are {\it independent} in $G$
over $H$ if the following equivalent conditions are satisfied:
$$
[G:H] = [G:H_1][G:H_2], \mbox{ or }
[G:H_1] = [H_2:H], \mbox{ or }
[G:H_2] = [H_1:H].
$$
We say that $H_1$ and $H_2$ are geometrically independent if the groups
$H_1 \cap \SL_n(\ZZhat)$ and $H_2 \cap \SL_2(\ZZhat)$ are independent in
$G \cap \SL_n(\ZZhat)$ over $H \cap \SL_n(\ZZhat)$.
\end{definition}

\noindent{\bf Remark.}
The equivalence of the conditions for independence follows from the equalities:
$$
[G:H] = [G:H_1][H_1:H] = [G:H_2][H_2:H].
$$

\noindent
The notion of independence corresponds to the equality of degrees
$$
\begin{array}{c}
m_1 = \deg\left(\cX_H\to\cX_{H_1}\right) = \deg\left(\cX_{H_2}\to\cX_G\right) = n_2,\\[2mm]
m_2 = \deg\left(\cX_H\to\cX_{H_2}\right) = \deg\left(\cX_{H_1}\to\cX_G\right) = n_1,
\end{array}
$$
in the commutative diagram of morphisms of modular curves:
$$
\begin{tikzcd}[column sep=3mm,row sep=6mm]
  & \cX_H
  \arrow[dl,"m_1",swap,start anchor={[xshift=+1mm,yshift=+1mm]south west}, end anchor={[xshift=+1mm]north}]
  \arrow[dr,"m_2",start anchor={[xshift=-1mm,yshift=+1mm]south east}, end anchor={[xshift=-1mm]north}] \\
  \cX_{H_1}
  \arrow[dr,"n_1",swap,start anchor={[xshift=-1mm,yshift=+1mm]south east}, end anchor={[xshift=-1mm]north}]
  & & \cX_{H_2}
  \arrow[dl,"n_2",start anchor={[xshift=+1mm,yshift=+1mm]south west}, end anchor={[xshift=+1mm]north}] \\
  & \cX_G
\end{tikzcd}
$$
A trivial instance of independence occurs when $N_1$ and $N_2$ are coprime, for which
$G = \GL_2(\ZZhat)$.  This allows one to decompose a level structure into independent
level structures of prime-power levels $N = \ell^n$.

\begin{proposition}
  Let $H_1$ and $H_2$ be open supgroups of $G = \GL_2(\ZZhat)$. Any two of following
  conditions implies the third.
  \begin{enumerate} 
    \item The subgroups $H_1$ and $H_2$ are independent in $G$.
    \item The subgroups $H_1$ and $H_2$ are geometrically independent in $G$.
    \item The subgroups $\det(H_1)$ and $\det(H_2)$ are independent in $\ZZhat^*$.
  \end{enumerate} 
\end{proposition}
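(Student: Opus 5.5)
The plan is to reduce everything to finite counting via the exact sequence
$$
1 \longrightarrow \SL_2(\ZZhat) \longrightarrow \GL_2(\ZZhat) \xrightarrow{\;\det\;} \ZZhat^* \longrightarrow 1 .
$$
All groups involved are open, so by the first Proposition we may replace them by their images modulo a common level $N=\mathrm{lcm}(N_1,N_2)$ and work with finite groups. Write $S_i = H_i\cap \SL_2$, $D_i=\det(H_i)$, $H=H_1\cap H_2$ and $S=S_1\cap S_2=H\cap\SL_2$. For any open $K$ the sequence gives $[\GL_2:K]=[\SL_2:K\cap \SL_2]\,[\ZZhat^*:\det K]$. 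Condition (3) is read as $[\ZZhat^*:D_1\cap D_2]=[\ZZhat^*:D_1][\ZZhat^*:D_2]$. We introduce three ratios:
$$
a=\frac{[\SL_2:S]}{[\SL_2:S_1][\SL_2:S_2]},\qquad
b=\frac{[\ZZhat^*:D_1\cap D_2]}{[\ZZhat^*:D_1][\ZZhat^*:D_2]},\qquad
c=\frac{[\ZZhat^*:\det H]}{[\ZZhat^*:D_1\cap D_2]} .
$$
Multiplicativity of the index along the exact sequence gives $[G:H]/([G:H_1][G:H_2]) = abc$. The standard orbit argument ($H_2$ acting on $G/H_1$) gives $a\le1$ and $b\le1$. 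Since $\det H\subseteq D_1\cap D_2$, we also have $c\ge1$. So (1), (2), (3) say respectively $abc=1$, $a=1$, $b=1$, and by the orbit argument each equality is equivalent to the corresponding product decomposition: $G=H_1H_2$, $\SL_2=S_1S_2$, $\ZZhat^*=D_1D_2$.

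\emph{Step 1: (2) forces $c=1$.} Let $d\in D_1\cap D_2$ and choose $h_i\in H_i$ with $\det h_i=d$. Then $h_1^{-1}h_2\in\SL_2=S_1S_2$. Writing $h_1^{-1}h_2=s_1s_2$ gives $h_1s_1=h_2s_2^{-1}\in H$ with determinant $d$, so $\det H=D_1\cap D_2$. Hence under (2) the condition $abc=1$ reduces to $b=1$. This gives both (2)$\wedge$(3)$\Rightarrow$(1) and (1)$\wedge$(2)$\Rightarrow$(3).

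\emph{Step 2: (1) gives $ac=1$.} Consider the fibre product $H_1\times_{\ZZhat^*}H_2=\{(h_1,h_2):\det h_1=\det h_2\}$, which has order $|D_1\cap D_2|\,|S_1|\,|S_2|$ at level $N$. Map it to $\SL_2$ by $(h_1,h_2)\mapsto h_1h_2^{-1}$. Each nonempty fibre is a coset of $H$, so it has order $|H|=|S|\,|\det H|$. Under (1) we have $G=H_1H_2$, so every $s\in\SL_2$ is a product $h_1h_2^{-1}$, and such $h_1,h_2$ automatically have equal determinants; thus the map is surjective. Counting then gives $|D_1\cap D_2||S_1||S_2|=|\SL_2||S||\det H|$, which is exactly $ac=1$. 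Combined with $abc=1$ this yields $b=1$, a second route to (1)$\Rightarrow$(3).

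\emph{Step 3: (1)$\wedge$(3)$\Rightarrow$(2), the main obstacle.} By Step 2 it suffices to prove $c=1$, i.e.\ $\det(H_1\cap H_2)=D_1\cap D_2$, using $G=H_1H_2$ and $\ZZhat^*=D_1D_2$. The first attempt is to lift: given $d\in D_1\cap D_2$, use $G=H_1H_2$ to write a suitable element of determinant $d$ (for instance a diagonal matrix $\mathrm{diag}(1,d)$) as $h_1h_2$, and then correct by elements of $D_1$ and $D_2$, using $D_1D_2=\ZZhat^*$, until one factor lies in $H$. If this direct lifting fails, I would decompose into prime-power levels, which is legitimate because coprime levels are trivially independent. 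I would then examine the $\ell$-adic components, where $\det$ has pro-cyclic (or, for $\ell=2$, small) image, and the subgroup lattice of $D_1, D_2$ is a chain or nearly so. In that setting $D_1D_2=\ZZhat^*$ forces one of the $D_i$ to be everything locally, and then $c=1$ follows as in Step 1 with that $H_i$ playing the role of $\GL_2$. This local reduction is where I expect the real difficulty, and where the precise meaning of ``independent in $G$'' in the statement must be checked carefully.
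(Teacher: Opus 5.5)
Your Steps 1 and 2 are correct, but Step 3 is aimed at a false claim, so no local argument can complete it. You read (3) as $[\ZZhat^*:D_1\cap D_2]=[\ZZhat^*:D_1][\ZZhat^*:D_2]$, i.e.\ $D_1D_2=\ZZhat^*$. Under that reading, (3) already follows from (1), since $G=H_1H_2$ gives $\ZZhat^*=\det(H_1H_2)=D_1D_2$; this is what your Step 2 recovers. So (1)$\wedge$(3)$\Rightarrow$(2) would amount to (1)$\Rightarrow$(2), and the paper's own example refutes that.

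Take $H_1=B_0(5)$ and $H_2=C_{ns}^+(5)$. The image of $H=H_1\cap H_2$ in $\GL_2(\FF_5)$ is the scalars extended by the Galois involution, of order $8$. Then $[G:H_1]=6$, $[G:H_2]=10$ and $[G:H]=60$, so (1) holds, and $D_1=D_2=\FF_5^*$. But $\det(H)=\{\pm1\}(\FF_5^*)^2=(\FF_5^*)^2$, because $-1$ is a square mod $5$. Hence $c=2$ and $a=1/2$; directly, $|H\cap\SL_2|=4$ gives $[\SL_2:S]=30\neq 6\cdot 10$.

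In this example both $D_i$ are everything locally, yet $\det(H_1\cap H_2)\neq D_1\cap D_2$. Your fallback ``as in Step 1'' cannot work, because the Step 1 lifting trick needs $\SL_2=S_1S_2$, which is exactly the conclusion you want. The local reduction has two further problems:
\begin{itemize}
\item reducing to prime-power levels is not available for entangled groups such as $C_{ns}^d(2)$;
\item $D_1D_2=\ZZ_\ell^*$ does not force one factor to be everything, e.g.\ the subgroups of index $2$ and $5$ in $\ZZ_5^*$.
\end{itemize}

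The missing point is what (3) means. The paper's proof applies $[\GL_2(\ZZhat):K]=[\SL_2(\ZZhat):K\cap\SL_2(\ZZhat)]\,[\ZZhat^*:\det K]$ to $K=H_1$, $H_2$ and $H_1\cap H_2$. So its determinant condition is independence of $\det H_1$ and $\det H_2$ over $\det(H_1\cap H_2)$, not over $\det H_1\cap\det H_2$:
$$
[\ZZhat^*:\det(H_1\cap H_2)]=[\ZZhat^*:\det H_1]\,[\ZZhat^*:\det H_2].
$$
In your notation this is $bc=1$. The paper's $B_0(N)$, $C_{ns}^+(N)$ example tests exactly $\det(\pi_N(H))$, which confirms this reading.

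With this reading, your own factorization $[G:H]/([G:H_1][G:H_2])=a\cdot(bc)$ turns (1), (2), (3) into $a\cdot bc=1$, $a=1$ and $bc=1$. Any two of these imply the third at once, and that is the whole of the paper's argument. Steps 1--3 are then unnecessary. Steps 1 and 2 do usefully show that $c$ measures the discrepancy between the two readings, and that the literal reading makes the proposition false.
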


\begin{proof}
  For any open subgroup $G \subset \GL_2(\ZZhat)$ of level $N$,
  and reduction map $\pi_N: \GL_2(\ZZhat) \to \GL_2(\ZZ/N\ZZ)$,
  we set $G_0 = \pi_N(G)$ and $G_1 = G_0 \cap \SL_2(\ZZ/N\ZZ)$.
  From the reduction $\bmod N$ of the exact sequence,
  $$
  \begin{tikzcd}
    1 \arrow[r] &
    \SL_2(\ZZhat) \cap G \arrow[r] \arrow[d] &
    G \arrow[r] \arrow[d] &
    \det(G) \arrow[r] \arrow[d] & 1 \\
    1 \arrow[r] & G_1 \arrow[r] & G_0 \arrow[r] & \det(G_0) \arrow[r] & 1
  \end{tikzcd}
  $$
  and the identity $[\GL_2(\ZZhat):G] = [\GL_2(\ZZ/N\ZZ):G_0]$, the
  multiplicative relation
  $$
  [\GL_2(\ZZ/N\ZZ):G_0] = [\SL_2(\ZZ/N\ZZ):G_1][(\ZZ/N\ZZ)^*:\det(G_0)].
  $$
  gives the required dependency relation between the three independence conditions.
\end{proof}

\noindent{\bf Example.}
  The Borel subgroup $B_0(2)$ and the nonsplit Cartan subgroup $C_{ns}(2)$
  are independent and geometrically independent over $H = G(2)$.
  This corresponds to the diagram of curves:
  $$
  \begin{tikzcd}[column sep=3mm,row sep=6mm]
    & X(2)
    \arrow[dl,"2",swap,start anchor={[xshift=+1mm,yshift=+1mm]south west}, end anchor={[xshift=+2.5mm]north}]
    \arrow[dr,"3",start anchor={[xshift=-1mm,yshift=+1mm]south east}, end anchor={[xshift=-3mm]north}] \\
    X_0(2)
    \arrow[dr,"3",swap,start anchor={[xshift=-1mm,yshift=+1mm]south east}, end anchor={[xshift=-2mm]north}]    & & X_{ns}(2)
    \arrow[dl,"2",start anchor={[xshift=+1mm,yshift=+1mm]south west}, end anchor={[xshift=+2mm]north}] \\
    & X(1)
  \end{tikzcd}
  $$
  Since the cyclotomic representions in $(\ZZ/2\ZZ)^* = \{1\}$ are trivial, independence
  and geometric independence are equivalent.

\begin{proposition}
  The Borel subgroup $B_0(2^n)$ and the nonsplit Cartan subgroup
  $C_{ns}(2^n)$ are independent, but not geometrically independent
  for $n > 1$.
\end{proposition}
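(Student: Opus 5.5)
The plan is to reduce everything to explicit finite group computations in $\GL_2(\ZZ/2^n\ZZ)$, using $[\GL_2(\ZZhat):G]=[\GL_2(\ZZ/N\ZZ):\pi_N(G)]$ from the proof of the preceding proposition. I will fix a model of the nonsplit Cartan subgroup. Take $R=\ZZ[\omega]$ with $\omega^2+\omega+1=0$; the prime $2$ is inert in $R$. Use the embedding $\iota(\omega)=\left(\begin{array}{@{}cc@{}}0&-1\\1&-1\end{array}\right)$, so that $a+b\omega\mapsto\left(\begin{array}{@{}cc@{}}a&-b\\b&a-b\end{array}\right)$. The reduction $\pi_{2^n}(C_{ns}(2^n))$ is then $(R/2^nR)^*$, of order $3\cdot 4^{n-1}$. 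We also have $|\GL_2(\ZZ/2^n\ZZ)|=6\cdot 16^{n-1}$ and $|\pi_{2^n}(B_0(2^n))|=2\cdot 8^{n-1}$. This gives the indices $[\GL_2:B_0(2^n)]=3\cdot2^{n-1}$ and $[\GL_2:C_{ns}(2^n)]=2\cdot4^{n-1}$.

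\emph{Independence.} First I compute the intersection $H=B_0(2^n)\cap C_{ns}(2^n)$ modulo $2^n$. The matrix of $a+b\omega$ is upper triangular exactly when $b\equiv0 \bmod 2^n$, so $H$ reduces to the scalars $(\ZZ/2^n\ZZ)^*$, which has order $2^{n-1}$. It follows that
$$|B_0||C_{ns}|/|H| = 2\cdot8^{n-1}\cdot3\cdot4^{n-1}/2^{n-1}=6\cdot16^{n-1}=|\GL_2(\ZZ/2^n\ZZ)|.$$
Hence the product set $B_0C_{ns}$ is the whole group. In particular $G=\langle B_0,C_{ns}\rangle=\GL_2(\ZZhat)$, and $[G:H]=[G:B_0][G:C_{ns}]$, which is independence. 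Equivalently, $C_{ns}$ acts transitively on $\PP^1(\ZZ/2^n\ZZ)$, and the stabilizer of the point $\infty$ is $B_0$.

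\emph{Failure of geometric independence.} Now intersect everything with $\SL_2$. Set $A=C_{ns}\cap\SL_2$, which consists of the norm-one units. The norm map $(R/2^n)^*\to(\ZZ/2^n\ZZ)^*$ is surjective because $2$ is unramified, so $|A|=3\cdot4^{n-1}/2^{n-1}=3\cdot2^{n-1}$. Set $B=B_0\cap\SL_2$; its order is $2\cdot8^{n-1}/2^{n-1}=2\cdot4^{n-1}$. Their intersection $A\cap B$ is the group of scalars $a$ with $a^2=1$. It has order $2$ for $n=2$ and order $4$ for $n\ge3$, and in both cases it is nontrivial precisely because $n>1$. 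For $n=1$ it is trivial, consistent with the Example.

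Next I check that $\langle A,B\rangle=\SL_2(\ZZ/2^n\ZZ)$. The group $A$ still acts transitively on $\PP^1(\ZZ/2^n\ZZ)$: the element $\omega$ of order $3$ permutes $\PP^1(\FF_2)$ cyclically, and the norm-one units $1+2(\cdots)$ act transitively on each fibre of the reduction map. The stabilizer of $\infty$ in $\SL_2$ is $B$, so $\langle A,B\rangle=\SL_2$. Geometric independence would then require $|A||B|/|A\cap B|=|\SL_2(\ZZ/2^n\ZZ)|=6\cdot8^{n-1}$. But the left side is $6\cdot8^{n-1}/|A\cap B|$, which is strictly smaller.

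\emph{Main obstacle.} I expect the delicate step to be the transitivity of the norm-one subgroup $A$ on $\PP^1(\ZZ/2^n\ZZ)$, which is what shows the generated group is all of $\SL_2$. I would handle it by lifting from level $2$ with a Hensel-type count, or more simply by noting that $A$ together with $B$ contains $\SL_2$ modulo $2$ and also the full unipotent kernel.

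It is also worth remarking how this sits with the preceding proposition. Conditions (1) and (3) there are both true in the naive sense, since $\det(B_0)=\det(C_{ns})=\ZZhat^*$. The failure of (2) therefore reflects that $\det(H)$ is only the squares, i.e.\ that $\det(H_1\cap H_2)\neq\det H_1\cap\det H_2$. The determinant condition must accordingly be read with $\det(H)$ in place of $\det(H_1)\cap\det(H_2)$.
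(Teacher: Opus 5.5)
Your independence argument is correct. The intersection reduces to the scalars, $|B_0||C_{ns}|/|H|=|\GL_2(\ZZ/2^n\ZZ)|$, and hence $B_0C_{ns}=\GL_2$, $G=\GL_2(\ZZhat)$ and $[G:H]=[G:H_1][G:H_2]$. The failure of geometric independence is also correctly established by your $\SL_2$-count: $|A||B|=6\cdot 8^{n-1}=|\SL_2(\ZZ/2^n\ZZ)|$, while $|A\cap B|\in\{2,4\}$ for $n>1$.

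The paper gives no separate proof of this proposition. In the adjacent odd-$\ell$ example it reasons through $\det(\pi_N(H))$ being a proper subgroup, and your count is the same computation, since $|A\cap B|=[(\ZZ/2^n\ZZ)^*:\det(\pi_N(H))]$ is the index of the squares. Your closing remark is also right. With $\det(B_0)=\det(C_{ns})=\ZZhat^*$, condition (3) of the preceding proposition holds literally, so that proposition is only consistent with this example if (3) is read with $\det(H_1\cap H_2)$. That is in fact how the paper uses it.

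One step, however, is false. $A=C_{ns}\cap\SL_2$ does \emph{not} act transitively on $\PP^1(\ZZ/2^n\ZZ)$ for $n>1$. The stabilizer of $\infty$ in $A$ is $A\cap B$, which you showed is nontrivial. Since $|A|=3\cdot 2^{n-1}=|\PP^1(\ZZ/2^n\ZZ)|$, the $A$-orbit of $\infty$ has size $|A|/|A\cap B|<|\PP^1(\ZZ/2^n\ZZ)|$. Indeed, transitivity of $A$ means $AB=\SL_2$, which is exactly the geometric independence you are disproving. Likewise, the norm-one units $\equiv 1 \bmod 2$ do not act transitively on the fibres. For $n=2$ they are just $\pm1$, which act trivially.

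Fortunately you do not need this step. The paper's definition fixes the ambient group as $G\cap\SL_2(\ZZhat)=\SL_2(\ZZhat)$, so your count already finishes the proof. If you want $\langle A,B\rangle=\SL_2$ anyway (for the reading where the ambient group is generated by $A$ and $B$), argue differently. The element $\iota(\omega)\in A$ sends $\infty$ to $0$, so $\iota(\omega)B\iota(\omega)^{-1}$ is the lower-triangular subgroup of $\SL_2(\ZZ/2^n\ZZ)$. Upper and lower unipotent matrices generate $\SL_2$ over the local ring $\ZZ/2^n\ZZ$. Your ``full unipotent kernel'' fallback, as written, is only an assertion.
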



\noindent{\bf Example.}
For a prime $N = \ell^n$, $\ell$ an odd prime, the subgroups
$H_1 = B_0(N)$ and $H_2 = C_{ns}^+(N)$ are independent in
$G = \GL_2(\ZZhat)$ over the intersection
$$
H = B_0(N) \cap C_{ns}^+(N) =
    (\ZZ/N\ZZ)^*I_2
    \cdot
    \left\langle
    \left(\begin{array}{@{\;}cc@{\;}}
      -1 & 0 \\ 0 & 1
    \end{array}\right)
    \right\rangle
    \cdot
  $$
  On the other hand, 
  $$
  \det(\pi_N(B_0(N))) = \det(\pi_N(C_{ns}^+(N))) = \det(\pi_N(G)) = (\ZZ/N\ZZ)^*,
  $$
  while $\det(\pi_N(H)) = \{\pm1\}((\ZZ/N\ZZ)^*)^2$.  It follows that
  $B_0(N)$ and $C_{ns}^+(N)$ are geometrically independent if
  and only if $\ell \equiv 3 \bmod 4$.
  This gives the following result.

\begin{proposition}
  Let $N = \ell^n$ for an odd prime $\ell$.
  The subgroups $B_0(N)$ and $C_{ns}^+(N)$ are independent, and
  geometrically independent if and only if $\ell \bmod 3 \bmod 4$.
\end{proposition}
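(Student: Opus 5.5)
The approach is to prove independence by an orbit–stabilizer computation, and then to deduce the geometric statement from the preceding Proposition (any two of the three independence conditions imply the third), using the determinant computation already sketched in the Example.

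\textbf{Setting up.} Everything reduces modulo $N=\ell^n$, since all groups involved are open of level $N$ and indices are computed in $\GL_2(\ZZ/N\ZZ)$. I realize the nonsplit Cartan concretely: let $\cO$ be the unramified quadratic extension ring of $\ZZ_\ell$. Then $A=\cO/\ell^n\cO$ is a local ring with maximal ideal $\ell A$ and residue field $\FF_{\ell^2}$, and it is free of rank $2$ over $\ZZ/N\ZZ$. With this identification, $\pi_N(C_{ns}(N))=A^*$ acting on $A\isom(\ZZ/N\ZZ)^2$ by multiplication. The normalizer $\pi_N(C_{ns}^+(N))=A^*\rtimes\langle\sigma\rangle$, where $\sigma$ is the Galois conjugation of $A$. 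Up to conjugation, $B_0(N)$ is the stabilizer of a cyclic subgroup $\langle v\rangle\subset A$ of order $N$. Hence $[\GL_2(\ZZhat):B_0(N)]=\ell^{n-1}(\ell+1)$, which is the number of such subgroups.

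\textbf{Independence.} I show that $C_{ns}^+(N)$ acts transitively on cyclic subgroups of order $N$, with stabilizer $H$.

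1. A generator $v$ of a cyclic subgroup of order $N$ is primitive, so $v\notin\ell A$. Since $A$ is local, $v\in A^*$, and therefore $A^*$ alone already acts transitively, via $u\mapsto u v$.
2. Take $v=1$. Its stabilizer in $A^*$ is $\{u : u\in(\ZZ/N\ZZ)^*\}$, i.e. the scalars $(\ZZ/N\ZZ)^*I_2$. Its stabilizer in $A^*\rtimes\langle\sigma\rangle$ is $(\ZZ/N\ZZ)^*I_2\cdot\langle\sigma\rangle$, because $\sigma(1)=1$.
3. In the basis $1,\sqrt{d}$ of $A$, $\sigma$ is $\mathrm{diag}(1,-1)$, up to the sign convention of the Example. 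So this stabilizer is exactly the group $H=B_0(N)\cap C_{ns}^+(N)$ of the Example, after conjugating so that $\langle v\rangle$ is the standard line.
4. Orbit–stabilizer then gives $[C_{ns}^+(N):H]=\ell^{n-1}(\ell+1)=[G:B_0(N)]$, which is the third form of the independence condition in the Definition.

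As a sanity check, $|A^*|/|(\ZZ/N\ZZ)^*| = \ell^{2n-2}(\ell^2-1)/(\ell^{n-1}(\ell-1))=\ell^{n-1}(\ell+1)$.

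\textbf{Geometric independence.} Since (1) holds, the preceding Proposition says that (2) is equivalent to (3): $\det B_0(N)$ and $\det C_{ns}^+(N)$ are independent in $\ZZhat^*$.

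1. Both determinant images are all of $(\ZZ/N\ZZ)^*$. Indeed, $\det$ on $A^*$ is the norm $A^*\to(\ZZ/N\ZZ)^*$, which is surjective for an unramified extension, and $B_0$ visibly has full determinant. So (3) reduces to $\det(\pi_N(H))=(\ZZ/N\ZZ)^*$.
2. From the description of $H$: scalars contribute the squares $((\ZZ/N\ZZ)^*)^2$, and $\sigma$ contributes $-1$. Hence $\det(\pi_N(H))=\{\pm1\}((\ZZ/N\ZZ)^*)^2$.
3. Squares have index $2$ in the cyclic group $(\ZZ/\ell^n\ZZ)^*$, and $-1$ is a square there iff it is a square mod $\ell$. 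So equality with $(\ZZ/N\ZZ)^*$ holds iff $-1$ is a nonsquare mod $\ell$, i.e. iff $\ell\equiv 3\bmod 4$. (The condition in the statement is to be read as $\ell\equiv3\bmod4$.)

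The main point requiring care is step 3 of the independence argument: identifying the stabilizer with precisely the group $H$ of the Example, including the claim that the involution $\sigma$ lies in it with determinant $-1$ rather than contributing a further square. I would verify this by writing $\sigma$ explicitly in the basis $1,\sqrt{d}$.
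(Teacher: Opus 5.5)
Your proof is correct and follows the paper's route. In the Example preceding the statement, the paper identifies $H = B_0(N)\cap C_{ns}^+(N) = (\ZZ/N\ZZ)^*I_2\cdot\langle\mathrm{diag}(-1,1)\rangle$, notes that $\det$ is surjective on both $B_0(N)$ and $C_{ns}^+(N)$ while $\det(\pi_N(H)) = \{\pm1\}((\ZZ/N\ZZ)^*)^2$, and concludes via the two-out-of-three proposition; the one difference is that the paper simply asserts independence, whereas you prove it by showing that $A^*$ acts transitively on cyclic subgroups of order $N$ (primitive vectors of the local ring $A$ are units) and computing the stabilizer, which fills in a step the paper leaves implicit.
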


A decomposition of level structure into pairwise independent subgroups $(H_1,\dots,H_t)$,
with $H = \bigcap_i H_i$ permits one to decompose the $H$-level structure into a hybrid
combination of $H_i$-orbits of torsion points or rational points on $\cX_{H_i}$.
\bigskip


\section{Explicit Isogeny graphs}

We describe several examples which illustrate the covering morphisms and new structures
obtained from the isogeny graphs with level structure.
\medskip

\noindent{\bf Example.}
Let $d$ be a squarefree integer, $D$ the discriminant of $\QQ(\sqrt{d})$,
and $\chi_D$ the associated quadratic character. The nonsplit Cartan modular
curve $X_{ns}(2)$ is defined by the cover $j(u) = u^2 + 1728$ of the
$j$-line $X(1)$, and we define the twisted nonsplit Cartan curve
$X_{ns}^d(2)$ by the cover $j(u) = u^2/d + 1728$.
Sutherland and Zywina~\cite[Remark~3.4]{SutherlandZywina} define the
associated open subgroup of $\GL_2(\ZZhat)$ as follows.
Let $\varepsilon: \GL_2(\ZZhat) \to \{\pm1\}$ be the unique quadratic
character which maps through $\GL_2(\FF_2)$, whose kernel is the
nonsplit Cartan subgroup $C_{ns}(2) = \ker(\varepsilon)$ of level~$2$.
The twist by the quadratic character $\chi_D \circ \det: \GL_2(\ZZhat)
\to \{\pm1\}$, gives a twisted nonsplit Cartan subgroup:
$$
C_{ns}^d(2) = \{ \gamma \in \GL_2(\ZZhat) \;:\; \chi_D(\det(\gamma)) \cdot \varepsilon(\gamma) = 1\}.
$$
The placement of $d$ in the denominator plays the role of the
(squarefree part) of the discriminant, since $j(q) = {E_6^2(q)}/{\Delta(q)} + 1728$,
and $E_6(q)$ and $\Delta(q)$ have polynomial expressions $c_6(E)$ and
$\Delta(E)$ in the coefficients of a given curve~$E/K$.
Consequently, a parametrization, $j(E) = c_6(E)^2/\Delta(E) + 12^3$, holds
if and only if $\Delta(E) \equiv d \bmod (K^*)^2$.  This condition is
satisfied, on the other hand, if and only if $K(\sqrt{d}) \subset K(E[2])$.

We can now describe the covering graphs associated to $X_{ns}^d(2) \to X(1)$,
and show that for different $d$, the graphs are indeed distinguished.
For each prime $\ell$ coprime to $2d$, we obtain a map
$$
\Graph_\ell(\SS(X_{ns}^{d}(2),p)) \longrightarrow \Graph_\ell(\SS(X(1),p)),
$$
which is a double cover on vertices, away from the ramified point
$j = 12^3$.  Over the field $\FF_{11^2} = \FF_{11}[i]$, with $i^2 = -1$,
we consider the respective supersingular $3$-isogeny graphs for $d = 1$
and $d = -1$ in Figure~\ref{figure:Cartan_pair_p=11}.

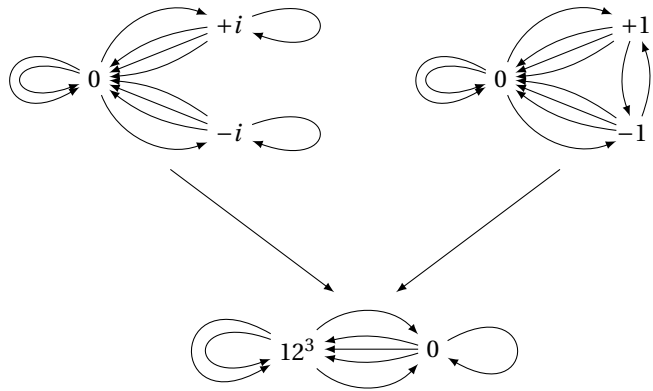
\begin{figure}[b]
$$
\begin{tikzpicture}[scale=0.90]
  \node (G1) at (1.000,-1.250) {$$}; 
  \node (d0) at (0.000,0.000) {$0$};
  \node (d1) at (2.000,+0.800) {$+i$};
  \node (d2) at (2.000,-0.800) {$-i$};
  \draw[->, >=latex] (d0) to [out=160, in=200, looseness=20] (d0);
  \draw[->, >=latex] (d0) to [out=150, in=210, looseness=16] (d0);
  \draw[->, >=latex] (d0) to [bend left=45] (d1);
  \draw[->, >=latex] (d0) to [bend right=45] (d2);
  \draw[->, >=latex] (d1) to (d0);
  \draw[->, >=latex] (d1) to [bend left=15] (d0);
  \draw[->, >=latex] (d1) to [bend right=15] (d0);
  \draw[->, >=latex] (d2) to (d0);
  \draw[->, >=latex] (d2) to [bend left=15] (d0);
  \draw[->, >=latex] (d2) to [bend right=15] (d0);
  \draw[->, >=latex] (d1) to [out=20, in=340, looseness=15] (d1);
  \draw[->, >=latex] (d2) to [out=20, in=340, looseness=15] (d2);
  \node (G2) at (7.000,-1.250) {$$}; 
  \node (c0) at (6.000,0.000) {$0$};
  \node (c1) at (8.000,+0.800) {$+1$};
  \node (c2) at (8.000,-0.800) {$\!\!\!-1$};
  \draw[->, >=latex] (c0) to [out=160, in=200, looseness=20] (c0);
  \draw[->, >=latex] (c0) to [out=150, in=210, looseness=16] (c0);
  \draw[->, >=latex] (c0) to [bend left=45] (c1);
  \draw[->, >=latex] (c0) to [bend right=45] (c2);
  \draw[->, >=latex] (c1) to (c0);
  \draw[->, >=latex] (c1) to [bend left=15] (c0);
  \draw[->, >=latex] (c1) to [bend right=15] (c0);
  \draw[->, >=latex] (c2) to (c0);
  \draw[->, >=latex] (c2) to [bend left=15] (c0);
  \draw[->, >=latex] (c2) to [bend right=15] (c0);
  \draw[->, >=latex] (c1) to [bend right=20] (c2);
  \draw[->, >=latex] (c2) to [bend right=20] (c1);
  \node (G0_L) at (3.666,-3.250) {$$}; 
  \node (G0_R) at (4.333,-3.250) {$$}; 
  \node (b0) at (3.000,-4.000) {$12^3$}; 
  \node (b1) at (5.000,-4.000) {$0$}; 
  \draw[->, >=latex] (b0) to [out=160, in=200, looseness=12] (b0);
  \draw[->, >=latex] (b0) to [out=150, in=210, looseness=10] (b0);
  \draw[->, >=latex] (b0) to [bend left=45] (b1);
  \draw[->, >=latex] (b0) to [bend right=45] (b1);
  \draw[->, >=latex] (b1) to (b0);
  \draw[->, >=latex] (b1) to [bend right=15] (b0);
  \draw[->, >=latex] (b1) to [bend left=15] (b0);
  \draw[->, >=latex] (b1) to [out=30, in=330, looseness=16] (b1);
  \draw[->, >=latex] (G1) to (G0_L);
  \draw[->, >=latex] (G2) to (G0_R);
\end{tikzpicture}
$$
\caption{Supersingular $2$-isogeny twisted Cartan graphs covers for $X_{ns}^t(2) \to X(1)/\FF_{11^2}$}
\label{figure:Cartan_pair_p=11}
\end{figure}

\noindent
The respective adjacency matrices are
$$
T_3 = \left(\begin{array}{@{}ccc@{}}2 & 1 & 1\\ 3 & 1 & 0\\ 3 & 0 & 1\end{array}\right) \mbox{ and }
T_3' = \left(\begin{array}{@{}ccc@{}}2 & 1 & 1\\ 3 & 0 & 1\\ 3 & 1 & 0\end{array}\right).
$$
We note that in general the automorphism of the supersingular points, exchanging
the points over each point of $\SS(X(1),p)$, stabilizes the graphs.  For this
example, this automorphism is given by the matrix
$$
U = \left(\begin{array}{@{}ccc@{}}1 & 0 & 0\\ 0 & 0 & 1\\ 0 & 1 & 0\end{array}\right),
$$
and we see that $U$ commutes with $T_3$ and $T_3' = UT_3 = T_3U$.  More generally
the automorphism $U$ commutes with all Hecke operators $T_\ell$ and either
$T_\ell' = U T_\ell$ if $\chi_D(\ell) = -1$ and$T_\ell' = T_\ell$ if $\chi_D(\ell) = 1$.
\bigskip

\noindent{\bf Example.}
Consider the diagonal subgroup $G = \{\pm\mathrm{diag}(1,d): d \in \ZZ/5\ZZ^*\}$
of $\GL_2(\ZZ/5\ZZ)$, whose intersection with $\SL_2(\ZZ/5\ZZ)$ is
$\overline{\Gamma(5)} = \{\pm I\}$.
The associated modular curve $\XC_G/\ZZ[1/5] = X(5)/\ZZ[1/5]$ has genus~$0$,
equipped with an $A_5$-Galois cover $X(5) \to X(1)$ of the $j$-line,
defined by the map $u \mapsto j(u)$, where
$$
j(u) = \frac{(u^{20} + 228u^{15} + 494u^{10} - 228u^5 + 1)^3}{u^5(u^{10} - 11u^5 - 1)^5}\cdot
$$
The supersingular point $j = 0 = 12^3$ on $X(1)/\FF_2$ corresponds to an elliptic
curve $E$ with $|\Aut(E)| = 24$, and splits into the five supersingular points
$\{1,\zeta_5,\zeta_5^2,\zeta_5^3,\zeta_5^4\}$ on $X(5)$ over $\FF_2[\zeta_5] = \FF_{2^4}$,
each with multiplicity~$12$.
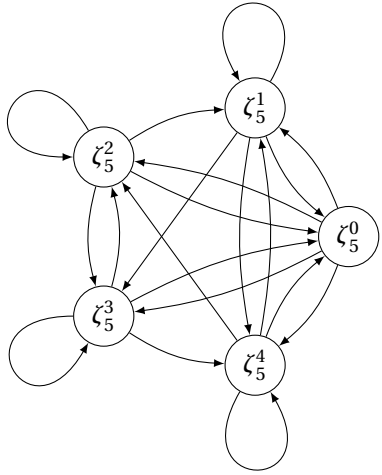
\begin{figure}
$$
\begin{tikzpicture}[scale=0.90]
  \node[draw, circle] (c0) at (+2.000,+0.000) {$\zeta_5^0$};
  \node[draw, circle] (c1) at (+0.618,+1.902) {$\zeta_5^1$};
  \node[draw, circle] (c2) at (-1.618,+1.176) {$\zeta_5^2$};
  \node[draw, circle] (c3) at (-1.618,-1.176) {$\zeta_5^3$};
  \node[draw, circle] (c4) at (+0.618,-1.902) {$\zeta_5^4$};
  \draw[->, >=latex] (c0) to [bend right=15] (c1);
  \draw[->, >=latex] (c0) to [bend right=10] (c2);
  \draw[->, >=latex] (c0) to [bend left=10] (c3);
  \draw[->, >=latex] (c0) to [bend left=15] (c4);
  \draw[->, >=latex] (c1) to [out=60, in=120, looseness=10] (c1);
  \draw[->, >=latex] (c1) to [bend right=15] (c0);
  \draw[->, >=latex] (c1) to (c3); 
  \draw[->, >=latex] (c1) to [bend right=10] (c4);
  \draw[->, >=latex] (c2) to [out=120, in=180, looseness=10] (c2);
  \draw[->, >=latex] (c2) to [bend right=10] (c0);
  \draw[->, >=latex] (c2) to [bend left=15] (c1);
  \draw[->, >=latex] (c2) to [bend right=15] (c3);
  \draw[->, >=latex] (c3) to [out=180, in=240, looseness=10] (c3);
  \draw[->, >=latex] (c3) to [bend left=10] (c0);
  \draw[->, >=latex] (c3) to [bend right=15] (c2);
  \draw[->, >=latex] (c3) to [bend right=15] (c4);
  \draw[->, >=latex] (c4) to [out=240, in=300, looseness=10] (c4);
  \draw[->, >=latex] (c4) to [bend right=10] (c1);
  \draw[->, >=latex] (c4) to (c2); 
  \draw[->, >=latex] (c4) to [bend left=15] (c0);
\end{tikzpicture}
$$
\caption{Supersingular $3$-isogeny graph of level $5$ over $\FF_{2^2}$}
\label{figure:Level_5_p=2}
\end{figure}
The graph of $3$-isogenies is given in Figure~\ref{figure:Level_5_p=2}, with adjacency matrix
$$
T_3 = 
\left(
\begin{array}{ccccc}
0 & 1 & 1 & 1 & 1\\
1 & 1 & 0 & 1 & 1\\
1 & 1 & 1 & 1 & 0\\
1 & 0 & 1 & 1 & 1\\
1 & 1 & 1 & 0 & 1
\end{array}
\right)\cdot
$$
The role of these adjacency matrices as Hecke operators, acting on the free
abelian group of supersingular points, is the object of the application of
correspondences on modular curves and graph adjacency to analysis of Galois
representations.

\section{Hecke modules on supersingular points and monodromy}


\subsection*{Supersingular modules with level structure}
Let $p$ be a prime, $G$ be an open subgroup of $\GL_2(\hat{\ZZ})$ of level $M$
coprime to $p$, and $\cS_G = \SS(\cX_G,p))$ the set of supersingular points on
$\cX_G$ over an algebraic closure $\bar{\FF}_p$ an of $\FF_p$. The supersingular
points divisor module on $\cX_G$ is the free abelian group
$$
\cM(\cS_G) = \bigoplus_{\EE\in\cS_G} \ZZ\cdot[\EE],
$$
and denote its $\cX(\cS_G)$ the subgroup of degree~$0$ divisors. Here we identify
an isomorphism class $[\EE]$ of enhanced elliptic curve with its associated point
on $\cX_G$.  For each $n$ coprime to $Mp$, we define the Hecke operators $T_n$
acting on $\cM(\cS_G)$ by
$$
T_n([\EE]) = \sum_{\varphi_i} [\EE_i],
$$
where the sum is over cyclic isogenies $\varphi_i: \EE \longrightarrow \EE_i$
of degree $n$, up to isomorphism of the codomain curve $\EE_i$.

For enhanced elliptic curves $\EE_1$ and $\EE_2$, let $\Isom(\EE_1,\EE_2)$ be the
set of isomorphisms from $\EE_1$ to $\EE_2$.  We define an inner product on
$\cM(\cS_G)$ by
$$
\langle[\EE_1],[\EE_2]\rangle = \frac{1}{2} |\Isom(\EE_1,\EE_2)|,
$$
extending $\langle\,,\,\rangle$ bilinearly to $\cM(\cS_G) \times \cM(\cS_G)$.
The Hecke operators are Hermitian with respect to the inner product:
$$
\langle[\EE_1],T_n[\EE_2]\rangle = \langle T_n[\EE_1],[\EE_2]\rangle.
$$
The orthogonal complement to $\cX(\cS_G)$ is the rank one submodule generated
over $\QQ$ by the element
$$
\Eis = \sum_{[E]\in\cS_G} \frac{1}{\langle[\EE],[\EE]\rangle} [\EE] \in \cM(\cS_G) \otimes \QQ,
$$
which we call the Eisenstein subspace of $\cM(\cS_G)$.

\subsection*{New subspaces}
Suppose that $H$ is an open subgroup of $\GL_2(\hat{\ZZ})$ containing $G$, and
$\pi: \cX_G \longrightarrow \cX_H$ the associated morphism of modular curves.
This morphism induces pushforward and pullback maps:
$$
\begin{tikzcd}[row sep=-2mm]
  \pi_* : \cX(\cS_G) \arrow[r] & \cX(\cS_H) & & \pi^* : \cX(\cS_H) \arrow[r] & \cX(\cS_G) \\
     {[\EE]} \arrow[r,mapsto]       & {[\pi(\EE)]} & & {[\EE]} \arrow[r,mapsto]       & \dsp \sum_{i = 1}^{\deg(\pi)}[\EE_i]
\end{tikzcd}
$$
where $\EE_i$ runs over the elements of $\pi^{-1}(\EE)$, with multiplicities.
The kernel $\ker(\pi_*)$ is called the $H$-new subspace of $\cX(\cS_G)$, and
the pullback $\pi^*(\cX(\cS_H))$ is called the $H$-old subspace of $\cX(\cS_G)$.
The intersection of the $H$-new subspaces for $H$ running over all open
subgroups $H$ (minimally) containing $G$ is called the new subspace
of $\cX(\cX_G)$ and the submodule generated by the $H$-old subspaces is called
the old subspace of $\cX(\cX_G)$.

One easily verifies that the $H$-old and $H$-new subspaces of $\cX(\cS_G)$
are orthogonal with respect to the inner product, and stabilized by the Hecke
operators $T_n$ with $n$ coprime to $N$, and consequently the old and new
subspaces are Hecke-invariant orthogonal submodules.
\bigskip


\noindent{\bf Remark.}
Mestre remarks that this construction, for Borel subgroups, is equivalent to
a divisor modular on left quaternion ideals of a quaternion Eichler order
described by Pizer (following Brandt and Eichler), and that this module can
be identified with a subspace of modular forms.
The equivalence of categories between supersingular elliptic curves and
left ideals of a maximal quaternion order~(see
Deuring~\cite[\S10.2]{Deuring} and Kohel~\cite[\S5.3]{Kohel96})
implies that the construction extends functorially to level structures
defined by open subgroups of $\GL_2(\hat{\ZZ})$.
\bigskip

\noindent{\bf Remark.}
The monodromy group $\cX(\cA,p)$ at $p$ of an abelian variety $\cA/\QQ$ with
semistable reduction at $p$ is the character group the toric part $T$ of
the special fiber at $p$ of its Neron model:
$$
\cX(\cA,p) = \Hom_{\bar{\FF}_p}(T,\GG_m).
$$
With $G$ and $p$ as above, set $G_0(p)$ be the intersection of $G$ with the
Borel subgroup $B_0(p)$ in $\GL_2(\hat{\ZZ})$. For $\cA = \Jac(\cX_{G_0(p)})$,
Grothendieck~\cite{SGA7} proves that the monodromy group can be canonically
identified with the supersingular divisor group:
$$
\cX(\cA,p) = \cX(\cS_G).
$$


\section*{Sieving for elliptic curves}

Supersingular modules permit one to compute the modular forms (or Galois
representations) associated to elliptic curves or modular abelian varieties
of given conductor.  Cowan~\cite{Cowan22} (reference) uses Mestre's original
construction~\cite{Mestre86} to sieve for low dimensional modular abelian
varieties of prime conductor. We illustrate the analogous construction
using the supersingular modules with level structure, which permits us
to determine empirical distributions of modular abelian varieties, with
prescribed ramification at small primes, beyond those readily accessible
in databases.

The rank of the supersingular modules $\XX(p,G)$ grows linearly with $p$,
but the Brandt matrices, determining the Hecke operators on $\XX(p,G)$,
are sparse.  Following Cowan, in order to study existence of elliptic
factors in $\XX(p,G)$, it suffices to sieve for the kernels:
$$
  \ker(T_\ell-c), \ker(T_\ell-c+1), \dots \ker(T_\ell+c-1), \ker(T_\ell+c), 
$$
where $c = \lfloor 2\sqrt{\ell}\rfloor$ is the Hasse-Weil bound.
This allows one to study existence of elliptic curves with semistable
reduction at $p$ and reduction type dictated by $G$ at primes dividing $N$.
Due to the cofactor of $p$ in the level, even for moderate $p$ we
rapidly exceed the levels in standard databases.

\bigskip

\noindent{\bf Example.}
The curves parametrized by Weber functions give genus~$0$ modular curves
of level 48.  As an example, we exhibit an orbit of twists of elliptic
curves which appear in the supersingular module of a Weber curve of
characteristic $p = 3851$.  The initial traces of Frobenius are given
in the table below.

\noindent
\begin{minipage}{0.75\textwidth}
{\small
\begin{verbatim}
Modular form traces of Frobenius:
[   5   7  11  13  17  19  23  29  31  37  41  43  47  53  59  61  67  71  73  79  83  89]
[   3   0   1   4   0  -5   3  -3   8   2   6  -8   9   2  12  -3  11  -8 -11  -4  -4 -15]
[   3   0  -1   4   0   5  -3  -3  -8   2   6   8  -9   2 -12  -3 -11   8 -11   4   4 -15]
[  -3   0   1   4   0   5   3   3  -8   2  -6   8   9  -2  12  -3 -11  -8 -11   4  -4  15]
[  -3   0  -1   4   0  -5  -3   3   8   2  -6  -8  -9  -2 -12  -3  11   8 -11  -4   4  15]
\end{verbatim}
}
\end{minipage}
\bigskip

\noindent
This shows the existence of modular elliptic curves of conductor $1,109,088 = 3851 \cdot 288$,
with additive reduction at 2 and 3 and multiplicative reduction at $p = 3851$.
Mention that these curves are not currently in the publically accessible LMFDB database~\cite{LMFDB},
but can be found in incomplete databases of higher conductor curves.


\end{document}